\def\p{\partial}
\def\Hom{\mbox{Hom}}
\def\R{\mathbb{R}}
\def\vv<#1>{\langle#1\rangle}
\def\XXint#1#2{\setbox0=\hbox{$#1{#2}{\int}$}{#2}\kern-.5\wd0 }
\def\XXint#1#2#3{{\setbox0=\hbox{$#1{#2#3}{\int}$}
     \vcenter{\hbox{$#2#3$}}\kern-.5\wd0}}
\def\vv<#1>{\left\langle#1\right\rangle}
\newtheorem{thm}{Theorem}[section]
\newtheorem{lem}{Lemma}[section]
\theoremstyle{definition}
\theoremstyle{remark}
\newtheorem{rem}{Remark}[section]
\numberwithin{equation}{section}
\def\Span{{\rm span}}
\def\Hom{{\rm Hom}}
\begin{document}

\title{Canonical connection and a geometric proof of the Frobenius theorem}

\author{Chengjie Yu$^1$}
\address{Department of Mathematics, Shantou University, Shantou, Guangdong, 515063, China}
\email{cjyu@stu.edu.cn}
\thanks{$^1$Research partially supported by GDNSF with contract no. 2025A1515011144}
\renewcommand{\subjclassname}{%
  \textup{2020} Mathematics Subject Classification}
\subjclass[2020]{Primary 53C05; Secondary 53C12}
\date{}
\keywords{integrable, distribution, canonical connection}
\begin{abstract}
In this paper, we introduce a new canonical connection on Riemannian manifold with a distribution. Moreover, as an application of the connection, we give a geometric proof of the Frobenius theorem.
\end{abstract}
\maketitle
\markboth{Chengjie Yu }{Canonical connections and a geometric proof of the Frobenius theorem}
\section{Introduction}

Let $M^n$ be a differential manifold. A smooth vector subbundle $E^r$ of $TM$ with rank $r$ is called a distribution of rank $r$. If for any $p\in M$, there is a regular submanifold $\Sigma^r$ of $M$ with dimension $r$ passing through $p$ such that $T_x\Sigma=E_x$ for any $x\in \Sigma$, then $E$ is called integrable. If for any $X,Y\in \Gamma(E)$, $[X,Y]\in \Gamma(E)$, then $E$ is called involutive. It is not hard to see that if a distribution $E$ is integrable, then it must be involutive. The Frobenius theorem for distributions confirms the converse.
\begin{thm}[Frobenius theorem]
Let $E^r$ be a distribution on $M^n$. Then, $E$ is integrable if and only if $E$ is involutive.
\end{thm}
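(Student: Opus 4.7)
The plan is to exploit the canonical connection $\nabla$ on $(M,g,E)$ introduced earlier. Assuming the properties of $\nabla$ established in the paper---most importantly, that $\nabla$ preserves the orthogonal splitting $TM=E\oplus E^{\perp}$ (so that parallel transport sends $E$ to $E$), and that its torsion restricted to $\Gamma(E)\times\Gamma(E)$ encodes precisely the obstruction to involutivity---the construction of an integral submanifold becomes a differential-geometric variant of the usual flow-based argument. Only one direction is nontrivial, namely involutive $\Rightarrow$ integrable, so I focus on that.

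For a fixed point $p\in M$ and small $\epsilon>0$ I would set
$$\Sigma:=\exp^{\nabla}_{p}\bigl(\{v\in E_p:|v|_g<\epsilon\}\bigr).$$
By the inverse function theorem, $\exp^{\nabla}_p$ is a diffeomorphism from a neighborhood of $0\in T_pM$ onto a neighborhood of $p$, so $\Sigma$ is a smooth regular $r$-dimensional submanifold of $M$. Since $\nabla$ preserves $E$, any $\nabla$-geodesic $\gamma(t)=\exp^{\nabla}_p(tv)$ with $v\in E_p$ has $\nabla$-parallel velocity, hence $\dot\gamma(t)\in E_{\gamma(t)}$ for all $t$; this already shows that the radial directions of $\Sigma$ at each of its points lie in $E$.

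The main obstacle is to upgrade this to $T_x\Sigma=E_x$ for \emph{every} $x\in\Sigma$, not only along a single radial geodesic. To do this I would study the two-parameter family $\gamma_s(t)=\exp^{\nabla}_p\bigl(t(v+sw)\bigr)$ with $v,w\in E_p$ and its variation field $J(t)=\partial_s\gamma_s(t)|_{s=0}$, which satisfies the Jacobi equation of $\nabla$ with $J(0)=0$ and $\nabla_{\dot\gamma}J(0)=w\in E_p$. As $w$ ranges over $E_p$, the fields $J(t)$ together with $\dot\gamma(t)$ span $T_{\gamma(t)}\Sigma$. Decomposing $J=J^{\top}+J^{\perp}$ according to $TM=E\oplus E^{\perp}$, I would derive a linear ODE for $J^{\perp}$ along $\gamma$ whose coefficients involve only the torsion and curvature of the canonical connection; invoking involutivity, which by the assumed properties of $\nabla$ forces the relevant torsion terms to vanish on $E$-directions, one concludes that $J^{\perp}\equiv 0$ is the unique solution of this ODE with $J^{\perp}(0)=0$ and $\nabla_{\dot\gamma}J^{\perp}(0)=0$.

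I expect the heart of the argument to be exactly this last step: correctly identifying the ODE governing $J^{\perp}$ and verifying that, under involutivity, its inhomogeneous terms and zeroth-order coefficient annihilate the surviving tangential part $J^{\top}$. This is where the specific form of the canonical connection---rather than an arbitrary $E$-preserving connection---should be essential; with the wrong choice, spurious tangential source terms would appear and the vanishing would fail. Once $J^{\perp}\equiv 0$ is established, one gets $T_x\Sigma\subseteq E_x$, and a dimension count gives equality, yielding the desired integral submanifold through $p$ and completing the nontrivial direction of the Frobenius theorem.
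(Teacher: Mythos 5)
Your proposal takes essentially the same route as the paper: build the canonical metric connection, use involutivity to make $E$ totally geodesic, define $\Sigma=\exp_p(E_p\cap B(0,\epsilon))$, and show $T_x\Sigma=E_x$ by proving the normal component of the Jacobi fields $J(t)=(\exp_p)_{*tv}(tw)$ vanishes identically. The step you defer as "the heart of the argument" --- writing the Jacobi equation of the torsion connection in a parallel frame adapted to $E\oplus E^{\perp}$ and obtaining a homogeneous linear ODE system for $J^{\perp}$ with zero initial data --- is exactly the paper's Lemma \ref{lem-exp} (resting on Lemma \ref{lem-curve}), and it goes through as you anticipate; the paper then merely packages the result as a flat coordinate chart via a two-stage exponential map.
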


In \cite{CW, Gu, Lu}, the authors gave alternative proofs to the Frobenius theorem. The proofs are all in an analytical flavor. In this short note, we give a geomtric proof to the Frobenius theorem by introducing a new canonical connection on a Riemannian manifold with a distribution.
\begin{thm}\label{thm-connection}
Let $(M^n,g)$ be a Riemannian manifold and $E^r$ be a distribution on $M$. Then, there is unique connection $\nabla$ on $M$ compatible with $g$ and with the torsion tensor $\tau$ satisfying the following properties:
\begin{enumerate}
\item $\tau(X,Y)=\tau(\xi,\eta)=0$,
\item $g(\tau(\xi,X),Y)=\frac12(L_\xi g)(X,Y)$,
\item $g(\tau(\xi,X),\eta)=-\frac12(L_X g)(\xi,\eta)$
\end{enumerate}
for any $X,Y\in \Gamma(E)$ and $\xi,\eta\in \Gamma(E^\perp)$.
\end{thm}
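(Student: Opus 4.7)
The strategy is a Koszul-type argument adapted to a prescribed torsion. Assuming such a connection $\nabla$ exists, combining the three forms of metric compatibility obtained by cyclically permuting $X,Y,Z$ with the identity $\nabla_AB-\nabla_BA=[A,B]+\tau(A,B)$ yields
\begin{align*}
2g(\nabla_XY,Z)&=Xg(Y,Z)+Yg(Z,X)-Zg(X,Y)\\
&\quad+g([X,Y],Z)-g([X,Z],Y)-g([Y,Z],X)\\
&\quad+g(\tau(X,Y),Z)-g(\tau(X,Z),Y)-g(\tau(Y,Z),X)
\end{align*}
for all vector fields $X,Y,Z$. Once the torsion is known, non-degeneracy of $g$ turns this into a definition of $\nabla_XY$, which gives uniqueness.

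The second step is to show that conditions (1)-(3) determine $\tau$ as a well-defined tensor field. By bilinearity and antisymmetry it suffices to specify $\tau(A,B)$ for each choice of $A,B$ lying in $E$ or $E^\perp$. Condition (1) handles the $E\times E$ and $E^\perp\times E^\perp$ blocks; in the remaining mixed case, (2) and (3) prescribe $g(\tau(\xi,X),\cdot)$ on $E$ and on $E^\perp$ respectively, so non-degeneracy of $g$ determines $\tau(\xi,X)$ pointwise. The key tensoriality check relies on the identity
\[(L_{f\xi}g)(X,Y)=f(L_\xi g)(X,Y)+(Xf)g(\xi,Y)+(Yf)g(X,\xi),\]
whose correction terms vanish since $X,Y\in\Gamma(E)$ and $\xi\in\Gamma(E^\perp)$; the analogous computation handles (3).

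For existence, define $\nabla$ by the displayed formula using the $\tau$ constructed above. Standard Koszul-style manipulations establish $C^\infty$-linearity in $X$ and the Leibniz rule in $Y$, so $\nabla$ is a genuine connection. Symmetrizing the formula in $Y\leftrightarrow Z$ recovers metric compatibility, and antisymmetrizing in $X\leftrightarrow Y$ recovers the torsion identity $\nabla_XY-\nabla_YX-[X,Y]=\tau(X,Y)$. The main obstacle I expect is the tensoriality verification for $\tau$: conditions (2)-(3) are phrased through Lie derivatives, which act as derivations rather than as tensors in the differentiating vector field, and one must carefully track the correction terms and invoke the orthogonality of $E$ and $E^\perp$ to cancel them. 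After this point everything else reduces to the classical proof of the fundamental theorem of Riemannian geometry.
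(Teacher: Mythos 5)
Your proposal is correct and follows essentially the same route as the paper: the paper's Theorem \ref{thm-fund-Riem} is exactly your Koszul formula with prescribed torsion, and your tensoriality check via $(L_{f\xi}g)(X,Y)=f(L_\xi g)(X,Y)+(Xf)g(\xi,Y)+(Yf)g(X,\xi)$ with the correction terms killed by orthogonality is precisely the content of the paper's Lemma \ref{lem-tensor} (which the paper leaves as a ``direct computation''). The block decomposition of $\Hom(\wedge^2TM,TM)$ according to $TM=E\oplus E^\perp$ is also the same in both arguments.
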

When $E=TM$, the connection introduced in Theorem \ref{thm-connection} is just the Levi-Civita connection of $(M,g)$. So, we call the connection $\nabla$ the Levi-Civita connection of $(M,g)$ w.r.t. $E$. A distinguished feature of this connection is that if $E$ is involutive, then $E$ is totally geodesic w.r.t. this connection.
\begin{thm}\label{thm-total-geo}
Let $(M^n,g)$ be a Riemannian manifold and $E^r$ be an involutive distribution on $M$. Let $\nabla$ be the Levi-Civita connection of $(M,g)$ w.r.t. $E$. Then, for any $X,Y\in \Gamma(E)$ and $\xi\in \Gamma(E^\perp)$,
$g(\nabla_XY,\xi)=0$ and equivalently $g(Y,\nabla_X\xi)=0$. In other words,  $E$ is totally geodesic w.r.t. the affine connection $\nabla$.
\end{thm}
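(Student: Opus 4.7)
The plan is to derive and then apply a Koszul-type formula for $g(\nabla_X Y, \xi)$. Because $\nabla$ is $g$-compatible, adding the three cyclic identities
\[
X g(Y,Z)+Y g(Z,X)-Z g(X,Y)
\]
and using the torsion relation $\nabla_U V-\nabla_V U=[U,V]+\tau(U,V)$ to swap one covariant derivative in each term, I will obtain
\[
2g(\nabla_X Y,Z)=X g(Y,Z)+Y g(Z,X)-Z g(X,Y)+g([X,Y],Z)-g([X,Z],Y)-g([Y,Z],X)+g(\tau(X,Y),Z)-g(\tau(X,Z),Y)-g(\tau(Y,Z),X),
\]
which holds for any metric connection with torsion $\tau$ and is just the standard Koszul formula corrected by three torsion terms.

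Next I specialize to $X,Y\in\Gamma(E)$ and $Z=\xi\in\Gamma(E^\perp)$. The first two ordinary terms die because $g(Y,\xi)=g(X,\xi)=0$; the bracket term $g([X,Y],\xi)$ dies by involutivity; and $g(\tau(X,Y),\xi)$ dies by property~(1). The two surviving torsion pieces are evaluated with property~(2): since $\tau$ is antisymmetric,
\[
-g(\tau(X,\xi),Y)=g(\tau(\xi,X),Y)=\tfrac12 (L_\xi g)(X,Y),
\]
and symmetrically $-g(\tau(Y,\xi),X)=\tfrac12 (L_\xi g)(X,Y)$. After these reductions the formula collapses to
\[
2g(\nabla_X Y,\xi)=-\xi g(X,Y)-g(Y,[X,\xi])-g(X,[Y,\xi])+(L_\xi g)(X,Y).
\]

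The last step is to expand $(L_\xi g)(X,Y)=\xi g(X,Y)-g([\xi,X],Y)-g(X,[\xi,Y])$ and observe that every surviving term cancels against its counterpart (using $[\xi,X]=-[X,\xi]$ and similarly for $Y$). This gives $g(\nabla_X Y,\xi)=0$. The equivalent statement $g(Y,\nabla_X\xi)=0$ then follows immediately by differentiating the identity $g(Y,\xi)\equiv 0$ in the direction $X$ and using $g$-compatibility of $\nabla$.

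I do not anticipate a serious obstacle: the derivation of the torsion-corrected Koszul formula is routine, and the only delicate point is sign bookkeeping when using antisymmetry of $\tau$ and the correct version of property~(2). The conceptual input is simply that involutivity kills $g(\xi,[X,Y])$ while property~(1) kills $g(\tau(X,Y),\xi)$, and the Lie derivative terms produced by property~(2) conspire with the remaining bracket and derivative terms to cancel exactly.
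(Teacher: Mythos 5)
Your proof is correct and takes essentially the same route as the paper: you specialize the torsion-corrected Koszul formula (the paper's Theorem \ref{thm-fund-Riem}, which you re-derive rather than cite) to $X,Y\in\Gamma(E)$ and $Z=\xi\in\Gamma(E^\perp)$, kill the $[X,Y]$ and $\tau(X,Y)$ terms by involutivity and property (1), and use property (2) so that the two surviving torsion terms contribute $(L_\xi g)(X,Y)$, which cancels exactly against the remaining derivative and bracket terms. The sign bookkeeping in your reductions checks out, so no changes are needed.
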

From this special feature of the Levi-Civita connection $\nabla$ of $(M,g)$ w.r.t. $E^r$, we can construct the integral submanifolds of $E$ geometrically by using the the exponential map of the connection $\nabla$ when $E$ is involutive. In fact, let $p\in E$ and $X_1,X_2,\cdots X_r$ be a basis of $E_p$, then 
$$\Sigma:=\left\{\exp_p\left(t_1X_1+t_2X_2+\cdots+t_rX_r\right)\ \big|\ |t_1|,|t_2|,\cdots, |t_r|<\epsilon\right\}$$
will give us the integral submanifold passing through $p$ when $\epsilon>0$ is small enough. Moreover, when the connection $\nabla$ is geodesic complete, the geometric construction will also give us some global descriptions of integral submanifolds.

On a Riemannian manifold $(M,g)$ with a distribution $E$, there are two canonical connections named after Schouten-Van Kampen and Vranceanu respectively (see \cite{BF}). Let $\nabla$ be the Levi-Civita connection of $(M,g)$. Then, the Schouten-Van Kampen connection $\nabla^\circ$ and the Vranceanu connection $\nabla^*$ are defined by
\begin{equation*}
\nabla^\circ_XY=(\nabla_{X}Y^\top)^\top+(\nabla_{X}Y^\perp)^\perp  \end{equation*}
and
\begin{equation*}
\nabla^*_XY=(\nabla_{X^\top}Y^\top)^\top+(\nabla_{X^\perp}Y^\perp)^\perp+[X^\perp,Y^\top]^\top+[X^\top,Y^\perp]^\perp \end{equation*}
respectively. Here $X=X^\top+X^\perp$ is the orthogonal decomposition of $X\in \Gamma(TM)$ according to $TM=E\oplus E^\perp$. Both of the Schouten-Van Kampen connection and the Vranceanu connection do not have the property that $E$ is totally geodesic  when it is involutive. 

On a foliation $E$, there is a well-known connection named after Bott defined on the normal bundles along each leaves (see \cite{Bo,GW}). After identifying the normal bundle with $E^\perp$, the Bott connection $D$ is defined as
$$D_X\xi=[X,\xi]^\perp,\ \forall\ X\in \Gamma(E)\mbox{ and }\xi\in \Gamma(E^\perp).$$
This connection is not compatible with the Riemannian metric in general. In fact, 
$$X(g(\xi,\eta))-g(D_X\xi,\eta)-g(\xi,D_X\eta)=(L_Xg)(\xi,\eta)$$
which will not vanish unless $L_Xg|_{E^\perp}=0$. So, the normal part of the Levi-Civita connection $\nabla$ of $(M,g)$ w.r.t. $E$ is different with the Bott connection.

\section{Proofs of the main results}
Let $(M^n,g)$ be a Riemannian manifold. For an affine connection $D$, the torsion tensor of $D$ is defined as
$$\tau^D(X,Y)=D_XY-D_YX-[X,Y]$$
which is a vector-valued two-form or more precisely a smooth section of $\Hom(\wedge^2TM,TM)$. The fundamental theorem of Riemannian geometry says that their is a unique torsion free affine connection compatible with $g$ on the Riemannian manifold $(M,g)$ (see \cite{CE} for example). The theorem has an extension to compatible connections with general torsion tensors whose proof is the same as the proof of the fundamental theorem of Riemannian geometry in \cite{CE}. So, we only collect the statement below and omit its proof.
\begin{thm}\label{thm-fund-Riem}
Let $(M^n,g)$ be a Riemannian manifold and $\mathscr A(M,g)$ be the space of affine connections on $M$ compatible with the metric $g$. Then, the map
$$\tau:\mathscr A(M,g)\to \Gamma(\Hom(\wedge^2 TM,TM)):D\mapsto \tau^D$$
is an affine bijection.  More precisely, for any $\tau\in \Gamma(\Hom(\wedge^2 TM,TM))$, the inverse of $\tau$ is $D^\tau\in \mathscr A(M,g)$ defined by
\begin{equation*}
\begin{split}
&2g(D^\tau _XY,Z)\\
=&X(g(Y,Z))-Z(g(X,Y))+Y(g(Z,X))+g([Z,X],Y)-g([Y,Z],X)+g([X,Y],Z)\\
&+g(\tau(Z,X),Y)-g(\tau(Y,Z),X)+g(\tau(X,Y),Z)
\end{split}
\end{equation*}
for all $X,Y,Z\in \Gamma(TM)$.
\end{thm}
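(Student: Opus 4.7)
The plan is to mimic the Koszul-formula derivation of the Levi--Civita connection, keeping track of the extra torsion contributions. The space $\mathscr A(M,g)$ is an affine space modelled on the vector space of $(1,2)$-tensors $A$ satisfying $g(A(X)Y,Z)+g(Y,A(X)Z)=0$, and the torsion map is affine since $\tau^{D+A}(X,Y)-\tau^D(X,Y)=A(X)Y-A(Y)X$; so the content of the theorem is that $\tau$ is a bijection, and for this it suffices to establish uniqueness (which yields the formula) and then verify that the formula actually defines an element of $\mathscr A(M,g)$ with the prescribed torsion.

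For uniqueness, suppose $D\in\mathscr A(M,g)$ has $\tau^D=\tau$. I would write the three cyclic permutations of the compatibility identity
\begin{equation*}
X g(Y,Z)=g(D_XY,Z)+g(Y,D_XZ),
\end{equation*}
form the combination (cyclic $X$-term) $+$ (cyclic $Y$-term) $-$ (cyclic $Z$-term), and in each of the six resulting $g(D_\cdot\cdot,\cdot)$-terms use $D_UV-D_VU=[U,V]+\tau(U,V)$ to move all covariant derivatives onto the fixed pair $(X,Y)$. After the cancellations, $2g(D_XY,Z)$ equals the right-hand side of the displayed formula in the theorem. Thus any such $D$ coincides with the $D^\tau$ defined by that formula, which simultaneously proves injectivity of $\tau$ and identifies the only possible preimage.

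For existence, I would define $D^\tau$ by the displayed formula and check the three structural properties. Tensoriality in $X$ is immediate because every term on the right-hand side is $C^\infty$-linear in $X$ (the Lie-bracket terms produce $X$-derivatives that cancel against the explicit derivatives of $g$). The Leibniz rule $D^\tau_X(fY)=X(f)Y+fD^\tau_XY$ is the one nontrivial tensorial check: the new $X(f)$-contributions arise from $X(g(fY,Z))$, from $g([Z,X],fY)-g([fY,Z],X)+g([X,fY],Z)$, and one verifies they collapse to exactly $2X(f)g(Y,Z)$. Compatibility $Xg(Y,Z)=g(D^\tau_XY,Z)+g(Y,D^\tau_XZ)$ is obtained by adding the formula for $2g(D^\tau_XY,Z)$ to its $Y\leftrightarrow Z$ counterpart and observing that all torsion and bracket terms pair off antisymmetrically, leaving $2Xg(Y,Z)$. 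The torsion identity $\tau^{D^\tau}=\tau$ is similarly obtained by subtracting the formula with $X,Y$ swapped from itself: the metric-derivative terms and the bracket terms combine to $2g([X,Y],Z)$, while the three torsion terms combine to $2g(\tau(X,Y),Z)$.

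The main obstacle is bookkeeping, not ideas: one must be disciplined with the signs in the six $g(D\cdot,\cdot)$-terms produced in uniqueness and with the $X(f)$-trail in the Leibniz check. Once those two computations are executed cleanly, compatibility and the torsion identity fall out by inspection, and the affine bijection statement follows because existence and uniqueness together say the explicit map $\tau\mapsto D^\tau$ is a two-sided inverse of $D\mapsto\tau^D$.
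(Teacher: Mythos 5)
Your proposal is correct and is exactly the argument the paper has in mind: the paper omits the proof of this theorem, stating that it is the same as the Koszul-formula proof of the fundamental theorem of Riemannian geometry in \cite{CE} with the torsion terms carried along, which is precisely what you carry out (and your derived formula matches the one displayed in the statement). No discrepancies to report.
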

\begin{rem}
In Theorem \ref{thm-fund-Riem}, note that $\mathscr A(M,g)$ is natrually an affine space. For that the map $\tau$ is affine, we mean that 
$$\tau^{\lambda_0D_0+\lambda_1D_1}=\lambda_0\tau^{D_0}+\lambda_1\tau^{D_1}$$
for any $D_0,D_1\in \mathscr A(M,g)$ and $\lambda_0,\lambda_1\in \R$ with $\lambda_0+\lambda_1=1$.
\end{rem}

 We need the following simple lemma to  prove Theorem \ref{thm-connection}.
\begin{lem}\label{lem-tensor}
Let $(M^n,g)$ be a Riemannian manifold and $E^r$ be a distribution on $M$. Let
$$T(\xi,X,Y):=(L_{\xi}g)(X,Y), \forall \xi\in \Gamma(E^\perp)\mbox{ and } \forall X,Y\in \Gamma(E).$$
Then, $T\in \Gamma(\Hom(E^\perp\otimes E\otimes E,\R)).$
\end{lem}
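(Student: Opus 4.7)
The plan is to check that $T$ is $C^\infty(M)$-linear in each of its three slots, since tensoriality of a multilinear map over $C^\infty(M)$ is equivalent to being a section of the corresponding $\Hom$-bundle. The starting point is the standard formula
\[
(L_\xi g)(X,Y)=\xi(g(X,Y))-g([\xi,X],Y)-g(X,[\xi,Y]).
\]

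First I would handle the $X$-slot (and then the $Y$-slot, which is identical by symmetry of $L_\xi g$). Replacing $X$ by $fX$ with $f\in C^\infty(M)$ produces one term from $\xi(f g(X,Y))$ carrying a factor $\xi(f)g(X,Y)$, and another from $[\xi,fX]=f[\xi,X]+\xi(f)X$ carrying $-\xi(f)g(X,Y)$; these cancel and leave $f(L_\xi g)(X,Y)$. This works for any $\xi,X,Y$ without using the distribution at all.

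The $\xi$-slot is the only place where the hypothesis $\xi\in\Gamma(E^\perp)$ and $X,Y\in\Gamma(E)$ is actually used, and is where I expect the mild subtlety. For general vector fields $L_\xi g$ is not tensorial in $\xi$; a direct computation with $f\xi$ in place of $\xi$, using $[f\xi,X]=f[\xi,X]-X(f)\xi$ and the same for $Y$, gives
\[
(L_{f\xi}g)(X,Y)=f(L_\xi g)(X,Y)+X(f)\,g(\xi,Y)+Y(f)\,g(X,\xi).
\]
The obstruction terms $X(f)g(\xi,Y)$ and $Y(f)g(X,\xi)$ vanish precisely because $\xi\in\Gamma(E^\perp)$ while $X,Y\in\Gamma(E)$, so orthogonality of $E$ and $E^\perp$ rescues tensoriality in $\xi$. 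Putting the three checks together shows $T$ is $C^\infty(M)$-trilinear, hence a section of $\Hom(E^\perp\otimes E\otimes E,\R)$, which is the claim. The only real obstacle is remembering that the $\xi$-slot is nontrivial and that the orthogonality hypothesis is doing the work there; the other two slots are formal.
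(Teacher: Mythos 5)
Your proof is correct and follows exactly the route the paper takes: the paper's proof simply asserts that one verifies $C^\infty(M)$-linearity in all three arguments by direct computation, and you have carried out that computation. Your explicit identification of the $\xi$-slot as the nontrivial case, where the obstruction terms $X(f)g(\xi,Y)$ and $Y(f)g(X,\xi)$ vanish only because $\xi\in\Gamma(E^\perp)$ and $X,Y\in\Gamma(E)$, is precisely the content the paper leaves implicit.
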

\begin{proof}
We only need to verify that $T$ is $C^\infty(M)$-linear w.r.t. its three arguments. The conclusion then follows by direct computation. 
\end{proof}
We are now ready to prove Theorem \ref{thm-connection} by using Theorem \ref{thm-fund-Riem}.
\begin{proof}[Proof of Theorem \ref{thm-connection}]
 Note that 
 \begin{equation*}
 \begin{split}
 &\Hom(\wedge^2TM,TM)\\
 =&\Hom(\wedge^2E,TM)\oplus\Hom(\wedge^2E^\perp,TM)\oplus \Hom(E^\perp\otimes E,E)\oplus\Hom(E^\perp\otimes E,E^\perp).
 \end{split}
  \end{equation*}
 So, to give a vector-valued two-form, we only need to specify its four components according to the above decomposition. Let $\tau$ be the vector-valued two-form with the four components specified by (1),(2) and (3) in the statements of Theorem \ref{thm-connection}. By Lemma \ref{lem-tensor}, we know that (2) and (3) specify the $\Hom(E^\perp\otimes E,E)$-part and $\Hom(E^\perp\otimes E, E^\perp)$-part of $\tau$ respectively. Then, by Theorem \ref{thm-fund-Riem}, we complete the proof of the theorem.
\end{proof}

We next come to prove Theorem \ref{thm-total-geo}.
\begin{proof}[Proof of Theorem \ref{thm-total-geo}]
By Theorem \ref{thm-fund-Riem}, we have
\begin{equation*}
\begin{split}
&2g(\nabla_XY,\xi)\\
=&X(g(Y,\xi))-\xi(g(X,Y))+Y(g(\xi,X))+g([\xi,X],Y)-g([Y,\xi],X)+g([X,Y],\xi)\\
&+g(\tau(\xi,X),Y)-g(\tau(Y,\xi),X)+g(\tau(X,Y),\xi)\\
=&-(L_\xi g)(X,Y)+g(\tau(\xi,X),Y)+g(\tau(\xi,Y),X)\\
=&0.
\end{split}
\end{equation*}
This completes the proof of the theorem.
\end{proof}
\begin{rem}
Note that requirement (3) in Theorem \ref{thm-connection} is not used in the proof of Theorem \ref{thm-total-geo}. We take this requirement only for the purpose making $E$ and $E^\perp$ have equal positions.
\end{rem}
In the geometric proof of the Frobenius theorem, we need the following two lemmas giving some properties of the Levi-Civita connection on $(M,g)$ w.r.t. to an involutive distribution $E$.
\begin{lem}\label{lem-curve}
Let $(M^n,g)$ be a Riemannian manifold with an involutive distribution $E^r$ and $\nabla$ be the Levi-Civita connection of $(M,g)$ w.r.t. $E$. Then,
\begin{enumerate}
\item for any smooth curve $\gamma:[0,a]\to M$ with $\gamma(0)\in E$ and $\nabla_{\gamma'}\gamma'(t)\in E$ for $t\in [0,a]$, we have $\gamma'(t)\in E$ for any $t\in [0,a]$;
\item let $\gamma:[0,a]\to M$ be a smooth curve with $\gamma'(t)\in E$ for $t\in [0,a]$ and $V$ be a smooth vector field along $\gamma$ with $V(0)\in E$ and $\nabla_{\gamma'}V(t)\in E$ for $t\in [0,a]$. Then, $V(t)\in E$ for $t\in [0,a]$;
\item let $\gamma:[0,a]\to M$ be a smooth curve with $\gamma'(t)\in E$ for $t\in [0,a]$ and $V$ be a smooth vector field along $\gamma$ with $V(0)\in E^\perp$ and $\nabla_{\gamma'}V(t)\in E^\perp$ for $t\in [0,a]$. Then, $V(t)\in E^\perp$ for $t\in [0,a]$.
\end{enumerate}
\end{lem}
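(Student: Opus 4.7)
The plan is to extract one pointwise consequence of Theorem~\ref{thm-total-geo} and apply it three times. Precisely: if $\sigma$ is any smooth curve in $M$ with $\sigma'(t)\in E_{\sigma(t)}$, then the induced covariant derivative along $\sigma$ preserves the orthogonal splitting, i.e.\ for any section $U$ of $TM$ along $\sigma$ one has $(\nabla_{\sigma'}U^\top)(t)\in E_{\sigma(t)}$ and $(\nabla_{\sigma'}U^\perp)(t)\in E^\perp_{\sigma(t)}$. This reduces, via a local orthonormal frame $e_1,\dots,e_r$ of $E$ and $e_{r+1},\dots,e_n$ of $E^\perp$, to the statements $\nabla_{\sigma'}e_i\in E$ and $\nabla_{\sigma'}e_\alpha\in E^\perp$, which Theorem~\ref{thm-total-geo} supplies directly.

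With this principle in hand, parts~(2) and (3) are essentially free. For (2), decompose $V=V^\top+V^\perp$ along $\gamma$. The principle identifies $\nabla_{\gamma'}V^\top$ and $\nabla_{\gamma'}V^\perp$ as the $E$- and $E^\perp$-parts of $\nabla_{\gamma'}V$; the hypothesis $\nabla_{\gamma'}V\in E$ therefore forces $\nabla_{\gamma'}V^\perp=0$, and metric compatibility yields
\[
\frac{d}{dt}|V^\perp|^2 \;=\; 2g\bigl(\nabla_{\gamma'}V^\perp,V^\perp\bigr)\;=\;0.
\]
Combined with $V^\perp(0)=0$ this gives $V^\perp\equiv 0$. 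Part~(3) is the mirror argument, interchanging $\top$ and $\perp$.

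Part~(1) is the main obstacle, since the preservation principle above cannot be invoked until we already know $\gamma'\in E$, which is exactly what we wish to prove. (I read the hypothesis ``$\gamma(0)\in E$'' as the evident typo $\gamma'(0)\in E$.) My plan is a Gronwall argument on $u(t):=|\xi(t)|^2$, where $\gamma'=X+\xi$ is the orthogonal decomposition with $X=(\gamma')^\top$ and $\xi=(\gamma')^\perp$, so $u(0)=0$. By compatibility and $\xi\in E^\perp$,
\[
u'(t)\;=\;2g(\nabla_{\gamma'}\xi,\xi)\;=\;2g\bigl(\nabla_{\gamma'}\gamma'-\nabla_{\gamma'}X,\,\xi\bigr)\;=\;-2g(\nabla_\xi X,\xi),
\]
using $g(\nabla_{\gamma'}\gamma',\xi)=0$ by the hypothesis and $g(\nabla_X X,\xi)=0$ because $\nabla_X X\in E$ by Theorem~\ref{thm-total-geo}. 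Expanding $X=\sum_i a^i e_i$ and $\xi=\sum_\alpha b^\alpha e_\alpha$ in an adapted orthonormal frame along $\gamma$, the remaining quantity is bilinear in the $b^\alpha$'s with coefficients bounded on $[0,a]$, so $|u'(t)|\le C\,u(t)$. With $u(0)=0$, Gronwall forces $u\equiv 0$, i.e.\ $\xi\equiv 0$ and $\gamma'(t)\in E$ throughout $[0,a]$.
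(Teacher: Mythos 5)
Your proof is correct, and it runs on the same engine as the paper's: Theorem~\ref{thm-total-geo} is what kills the cross terms between $E$ and $E^\perp$, after which each part reduces to a first-order uniqueness statement with vanishing initial data. The packaging is genuinely a bit different, though. For (2) and (3) the paper sets up a homogeneous linear ODE system for the inner products $\vv<V,X_\alpha>$ against a frame of $E^\perp$ and invokes uniqueness; you instead note that $\nabla_{\gamma'}$ preserves the splitting along an $E$-tangent curve, so the hypothesis forces $\nabla_{\gamma'}V^\perp=0$ and $|V^\perp|$ is constant --- slightly cleaner, with no ODE system at all. For (1) the paper derives a homogeneous linear system \eqref{eq-a} for the normal components of $\gamma'$ (homogeneity coming from $\Gamma^\alpha_{ij}=0$ for $i,j\le r$), while you run Gronwall on $u=|(\gamma')^\perp|^2$; these are the same estimate in different clothes, and your reading of ``$\gamma(0)\in E$'' as $\gamma'(0)\in E$ matches what the paper's own proof actually uses. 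Two small points to make explicit: the expression $g(\nabla_\xi X,\xi)$ for fields defined only along $\gamma$ should be interpreted through the frame expansion you mention (expand $X=\sum_i a^i e_i$ and use $g(\nabla_X e_i,\xi)=0$ from Theorem~\ref{thm-total-goe} --- rather, Theorem~\ref{thm-total-geo} --- so that no extension of $X$ off the curve is needed and the remainder is exactly quadratic in $\xi$); and the adapted frame, hence your constant $C$, exists only locally, so covering $[0,a]$ requires the usual compactness or maximal-interval argument --- an issue the paper's local-frame proof shares and likewise leaves tacit.
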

\begin{proof}
\item[(1)] Let $X_1,X_2,\cdots, X_r$ and $X_{r+1}, X_{r+2},\cdots, X_n$ be local frames of $E$ and $E^\perp$ at $\gamma(0)$. Suppose that
$$\gamma'(t)=\sum_{i=1}^na_i(t)X_i\mbox{ and }\nabla_{X_j}X_i=\sum_{k=1}^n\Gamma_{ij}^kX_k.$$
Then, by Theorem \ref{thm-total-geo}, we have $\Gamma_{ij}^\alpha=0$ when $1\leq i,j\leq r$ and $r+1\leq\alpha\leq n$. Moreover, by that 
$$\nabla_{\gamma'}\gamma'=\sum_{i=1}^n\left(a'_i+\sum_{k,l=1}^na_ka_l\Gamma_{kl}^i\right)X_i\in E.$$
We have
\begin{equation}\label{eq-a}
a'_\alpha+\sum_{\beta={r+1}}^nA_{\alpha\beta} a_\beta=0\ \mbox{for $\alpha=r+1,r+2,\cdots,n$}
\end{equation}
where 
$$A_{\alpha\beta}=\sum_{k=1}^ra_k\left(\Gamma_{k\beta}^\alpha+\Gamma_{\beta k}^\alpha\right)+\sum_{\lambda=r+1}^{n}a_\lambda\Gamma_{\lambda\beta}^\alpha.$$
Note that \eqref{eq-a} is a homogeneous linear system of ODEs on $a_\alpha$ with $r+1\leq \alpha\leq n$ and $a_\alpha(0)=0$ for $r+1\leq\alpha\leq n$ by that $\gamma'(0)\in E$. Thus $a_\alpha\equiv 0$ and hence $\gamma'\in E$. This completes the proof of (1).
\item[(2)] Let $X_{r+1},X_{r+2},\cdots,X_{n}$ be a smooth frame of $E^\perp$ along $\gamma$. By Theorem \ref{thm-total-geo}, $\nabla_{\gamma'}X_\alpha\in E^\perp$ for $r+1\leq\alpha\leq n$. Suppose that
$$\nabla_{\gamma'}X_{\alpha}=\sum_{\beta=r+1}^nB_{\alpha\beta}X_\beta.$$
Then
    $$\vv<V,X_\alpha>'=\vv<\nabla_{\gamma'}V,X_\alpha>+\vv<V,\nabla_{\gamma'}X_\alpha>=\sum_{\beta=r+1}^nB_{\alpha\beta}\vv<V,X_\beta>$$
for $\alpha=r+1,r+2,\cdots,n$. This forms a homogeneous linear system of ODEs on $\vv<V,X_\alpha>$ with $\alpha=r+1,r+2,\cdots,n$. Noting that $\vv<V,X_\alpha>(0)=0$, we have $\vv<V,X_\alpha>\equiv 0$ and hence $V(t)\in E$ for $t\in [0,a]$. This completes the proof of (2).
\item[(3)] The proof is the same as that of (2).
\end{proof}
\begin{lem}\label{lem-exp}
Let $(M^n,g)$ be a Riemannian manifold with an involutive distribution $E^r$ and $\nabla$ be the Levi-Civita connection on $(M,g)$ w.r.t. $E$. Then, for any $p\in M$ and $X\in E_p$ where $\exp_p$ is defined, 
$\left(\exp_p\right)_{*X}(Y)\in E$ for any $Y\in E_p$.  
\end{lem}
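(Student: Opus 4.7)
The plan is to exhibit $(\exp_p)_{*X}(Y)$ as the time-$1$ value of a Jacobi-type variation field along the geodesic $\gamma_0(t):=\exp_p(tX)$, and then show that its $E^\perp$-component satisfies a closed homogeneous second-order linear ODE with vanishing Cauchy data. First I would set up the variation $f(s,t):=\exp_p(t(X+sY))$. Since $X+sY\in E_p$ for every $s$, Lemma \ref{lem-curve}(1) ensures that each $t$-curve $\gamma_s(t):=f(s,t)$ is a geodesic tangent to $E$. Setting $J(t):=\partial_s f(s,t)|_{s=0}$, one has $J(1)=(\exp_p)_{*X}(Y)$, while the torsion identity $\nabla_{\partial_t}\partial_s f=\nabla_{\partial_s}\partial_t f+\tau(\partial_t f,\partial_s f)$ evaluated at $(s,t)=(0,0)$ (where $\partial_s f$ vanishes) yields $J(0)=0$ and $\nabla_{\partial_t}J(0)=Y$.

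Next I would derive the torsion-twisted Jacobi equation
\[
\nabla_{\partial_t}^2 J=R(\gamma_0',J)\gamma_0'+\nabla_{\partial_t}\bigl(\tau(\gamma_0',J)\bigr)
\]
by commuting $\nabla_{\partial_t}$ and $\nabla_{\partial_s}$, using $[\partial_t,\partial_s]=0$ together with the geodesic equation $\nabla_{\partial_t}\partial_t f=0$. Decomposing $J=J^\top+J^\perp$ along $\gamma_0$ and putting $U:=J^\perp$, Theorem \ref{thm-total-geo} implies that $\nabla_{\partial_t}$ preserves the splitting $E\oplus E^\perp$ along $\gamma_0$ (because $\gamma_0'\in E$), so $(\nabla_{\partial_t}^2J)^\perp=\nabla_{\partial_t}^2U$ and the initial data descend to $U(0)=0$, $\nabla_{\partial_t}U(0)=Y^\perp=0$.

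The heart of the argument is to verify that the $E^\perp$-projection of the right-hand side depends only on $U$, not on $J^\top$. For the torsion term, condition (1) of Theorem \ref{thm-connection} gives $\tau(\gamma_0',J^\top)=0$, hence $\tau(\gamma_0',J)=\tau(\gamma_0',U)$ is linear only in $U$. For the curvature term, I would establish the auxiliary identity $R(V_1,V_2)V_3\in\Gamma(E)$ whenever $V_1,V_2,V_3\in\Gamma(E)$: Theorem \ref{thm-total-geo} applied twice gives $\nabla_{V_1}\nabla_{V_2}V_3,\nabla_{V_2}\nabla_{V_1}V_3\in\Gamma(E)$, while involutivity of $E$ gives $[V_1,V_2]\in\Gamma(E)$ and thence $\nabla_{[V_1,V_2]}V_3\in\Gamma(E)$. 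By tensoriality of $R$, this yields $R(\gamma_0',J^\top)\gamma_0'\in E$ pointwise. Consequently the $E^\perp$-projection of the Jacobi equation becomes a homogeneous second-order linear ODE $\nabla_{\partial_t}^2 U=A(t)U+B(t)\nabla_{\partial_t}U$; writing $U$ in a parallel $E^\perp$-frame along $\gamma_0$ (which exists by Theorem \ref{thm-total-geo}), uniqueness of ODE solutions with vanishing Cauchy data forces $U\equiv 0$, so $J(1)=(\exp_p)_{*X}(Y)\in E$.

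The main obstacle I anticipate is the auxiliary curvature identity $R(\Gamma(E),\Gamma(E))\Gamma(E)\subset\Gamma(E)$, because this is the unique place where involutivity of $E$ enters essentially, killing the $J^\top$-to-$J^\perp$ coupling in the curvature term, just as condition (1) of Theorem \ref{thm-connection} kills it in the torsion term. Once this twin cancellation is in place, the remaining pieces --- the derivation of the torsion-twisted Jacobi equation, the splitting of $J$, and the ODE uniqueness argument --- are routine.
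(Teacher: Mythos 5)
Your proposal is correct and follows essentially the same route as the paper: the geodesic variation $\exp_p(t(X+sY))$, the torsion-twisted Jacobi equation $\nabla_{\gamma'}^2J=R(\gamma',J)\gamma'+\nabla_{\gamma'}(\tau(\gamma',J))$, the twin cancellations $\tau(E,E)=0$ and $R(E,E)E\subset E$ (the latter derived from total geodesy plus involutivity exactly as in the paper), and uniqueness for the resulting homogeneous linear ODE with vanishing Cauchy data. The only cosmetic difference is that the paper carries out the same argument componentwise in a parallel orthonormal frame adapted to $E\oplus E^\perp$ (built via Lemma \ref{lem-curve}(2),(3)) rather than with the abstract projection $U=J^\perp$.
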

\begin{proof}
Let $\gamma(t)=\exp_p(tX)$ with $t\in [0,1]$. By (1) of Lemma \ref{lem-curve}, $\gamma'(t)\in E$ for any $t\in [0,1]$. Let $v_1,v_2,\cdots,v_n$ be an orthonormal basis of $T_pM$ such that $v_i\in E_p$ for $1\leq i\leq r$ and $v_\alpha\in E_p^\perp$ for $r+1\leq \alpha\leq n$. Let $V_i$ be the parallel translation of $v_i$ along $\gamma$ for $1\leq i\leq n$. Then, by (2) and (3) of Lemma \ref{lem-curve}, we know that $V_i(t)\in E$ and $V_\alpha(t)\in E^\perp$ for $t\in [0,1]$, $1\leq i\leq r$ and $r+1\leq \alpha\leq n$ respectively.

Let $\Phi(u,t)=\exp_p(t(X+uY))$ with $t\in [0,1]$ and $u\in (-\epsilon,\epsilon)$ for some $\epsilon>0$ small enough. It is a geodesic variation of $\gamma$. So its variation field $$J(t)=(\exp_p)_{*tX}(tY)\mbox{ for }t\in [0,1]$$
is a Jacobi field along $\gamma$ with $J(0)=0$ and $J'(0)=Y\in E_p$. It satisfies the Jacobi field equation:
$$\nabla_{\gamma'}\nabla_{\gamma'} J=R(\gamma',J)\gamma'+\nabla_{\gamma'}\left(\tau(\gamma',J)\right).$$
Let $J=\sum_{i=1}^nJ_iV_i$. Then, for $\alpha=r+1,r+2,\cdots,n$, we have
\begin{equation}\label{eq-J}
\begin{split}
J''_\alpha=&\vv<J,V_\alpha>''\\
=&\vv<J'',V_\alpha>\\
=&\vv<R(\gamma',J)\gamma',V_\alpha>+\vv<\nabla_{\gamma'}\left(\tau(\gamma',J)\right),V_\alpha>\\
=&\sum_{\beta=r+1}^n\vv<R(\gamma',V_\beta)\gamma',V_\alpha>J_\beta+\vv<\tau(\gamma',J),V_\alpha>'\\
=&\sum_{\beta=r+1}^n\vv<R(\gamma',V_\beta)\gamma',V_\alpha>J_\beta+\left(\sum_{\beta=r+1}^n\vv<\tau(\gamma',V_\beta),V_\alpha>J_\beta\right)'\\
=&\sum_{\beta=r+1}^n\vv<\tau(\gamma',V_\beta),V_\alpha>J'_\beta+\sum_{\beta=r+1}^n\left(\vv<R(\gamma',V_\beta)\gamma',V_\alpha>+\vv<\tau(\gamma',V_\beta),V_\alpha>\right)J_\beta.
\end{split}
\end{equation}
Here, we have used the fact that $R(X,Y)Z\in E$ for any $X,Y,Z\in E$ since $E$ is totally geodesic by Theorem \ref{thm-total-geo} and (1) of Theorem \ref{thm-connection}. 

Therefore, \eqref{eq-J} form a linear homogeneous system of ODEs for $J_\alpha$ with $r+1\leq J_\alpha\leq n$. Noting that $J_\alpha(0)=J'_\alpha(0)=0$ for $r+1\leq\alpha\leq n$ since $J(0)=0$ and $J'(0)=Y\in E$. Thus $J_\alpha\equiv 0$ for $r+1\leq\alpha\leq n$ and hence $J(t)\in E$ for any $t\in [0,1]$. In particular, $$J(1)=\left(\exp_p\right)_{*X}(Y)\in E.$$ This completes the proof of the lemma.
\end{proof}
Finally, we come to prove the Frobenius theorem.
\begin{thm}
Let $E^r$ be an involutive distribution on the smooth manifold $M^n$. Then, for any $p\in M$, there is a local coordinate $(U,x)$ at $p$ such that $$E|_U=\Span\left\{\frac{\p}{\p x^1},\frac{\p}{\p x^2},\cdots,\frac{\p}{\p x^r}\right\}.$$
\end{thm}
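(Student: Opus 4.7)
The plan is to build the coordinate chart at $p$ from the exponential map of the canonical connection $\nabla$ furnished by Theorem \ref{thm-connection}. Since every smooth manifold admits a Riemannian metric, I would first endow $M$ with any such $g$ and form the Levi-Civita connection of $(M,g)$ w.r.t.\ $E$. The essential input is Lemma \ref{lem-exp}: for any $q$ and any $X \in E_q$ in the domain of $\exp_q$, the map $(\exp_q)_{*X}$ sends $E_q$ into $E$. Geometrically, this says that exponentiating along $E$-directions from any point produces the integral leaf through that point. The strategy is therefore to flow a small transverse slice through $p$ by leafwise exponentials and read off coordinates.

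Concretely, choose an orthonormal basis $e_1,\ldots,e_n$ of $T_pM$ with $e_1,\ldots,e_r \in E_p$ and $e_{r+1},\ldots,e_n \in E_p^\perp$, and select a smooth local frame $E_1,\ldots,E_r$ of $E$ near $p$ with $E_i(p)=e_i$ (this exists because $E$ is a smooth subbundle). Define a transverse slice
$$\sigma(s_{r+1},\ldots,s_n) \;=\; \exp_p\Bigl(\sum_{\alpha=r+1}^{n} s_\alpha e_\alpha\Bigr),$$
and then combine it with leafwise exponentials by setting
$$\Psi(t_1,\ldots,t_r,s_{r+1},\ldots,s_n) \;=\; \exp_{\sigma(s)}\Bigl(\sum_{i=1}^{r} t_i\,E_i(\sigma(s))\Bigr)$$
on a small neighbourhood of $0 \in \R^n$.

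Next I would verify that $\Psi$ is a local diffeomorphism at $0$ by inspecting its differential. Since $\Psi(0,s)=\sigma(s)$, the standard property of the exponential map gives $\partial\Psi/\partial s_\alpha|_0 = e_\alpha$; since $\Psi(t,0)=\exp_p\bigl(\sum_i t_i e_i\bigr)$, similarly $\partial\Psi/\partial t_i|_0 = e_i$. Hence $d\Psi_0$ is the identity in the chosen bases and the inverse function theorem supplies a chart $x = \Psi^{-1}$ on some neighbourhood $U$ of $p$. For each fixed $s$, Lemma \ref{lem-exp} applied at the base point $\sigma(s)$ with the $E_{\sigma(s)}$-vectors $\sum_i t_i E_i(\sigma(s))$ yields $\partial\Psi/\partial t_i \in E$, so $\partial/\partial x^i \in E|_U$ for $1 \le i \le r$. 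These $r$ coordinate fields are pointwise linearly independent and $E$ has rank $r$, hence they span $E|_U$, which is the desired conclusion.

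The heavy geometric lifting has already been done in Theorem \ref{thm-total-geo} and Lemma \ref{lem-exp}, so I do not anticipate a genuine obstacle here: the remaining work is the bookkeeping described above, namely showing that the hybrid map $\Psi$—a transverse geodesic slice in $E^\perp$-directions followed by leafwise exponentials in $E$-directions—is a diffeomorphism at the origin with $\partial/\partial x^i \in E$ for $i \le r$. The only point deserving a brief check is that $\sigma$ is an immersion near $0$, which is immediate from $d\sigma_0(\partial/\partial s_\alpha) = e_\alpha$.
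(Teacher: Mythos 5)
Your proposal is correct and follows essentially the same route as the paper: endow $M$ with a metric, take the canonical connection of Theorem \ref{thm-connection}, exponentiate a transverse slice in $E^\perp$-directions and then leafwise in $E$-directions, invoke the inverse function theorem at the origin, and use Lemma \ref{lem-exp} to see that the first $r$ coordinate fields lie in (hence span) $E$. The only cosmetic difference is your choice of an orthonormal basis at $p$ versus the paper's local frames of $E$ and $E^\perp$; the map $\Psi$ is the paper's $\Phi$.
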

\begin{proof}
Let $g$ be Riemannian metric on $M$ and $\nabla$ be the Levi-Civita connection of $g$ w.r.t. $E$. Let $X_1,X_2,\cdots, X_r$ and $X_{r+1},\cdots,X_n$ be local frames of $E$ and  $E^\perp$ on some open neighborhood of $p$ respectively. Consider the map $\Phi:B_o(\delta)\to M$ defined as
$$\Phi(x)=\exp_{\exp_p\left(x^{r+1}X_{r+1}+\cdots+x^nX_n\right)}\left(x^1X_1+\cdots x^rX_r\right).$$
Here $B_o(\delta)$ is a ball of $\R^n$ centered at the origin $o$ and $\delta>0$ is small enough so that the map $\Phi$ is defined. It is clear that $$\Phi_{*o}\left(\frac{\p}{\p x^{i}}\right)=X_i(p)$$ for $i=1,2,\cdots, n$. By the inverse function theorem, when $\delta$ is small enough, $\Phi$ is a smooth embedding. Let $U=\Phi(B_o(\delta))$ and $x=\Phi^{-1}$. Then, by Lemma \ref{lem-exp}, this is the required local coordinate.
\end{proof}

\end{document}